\documentclass[11pt]{article}
\usepackage{amsmath}
\usepackage{graphicx,psfrag,epsf}
\usepackage{enumerate}
\usepackage{natbib}
\usepackage{amsmath,amsthm,amsfonts,epsfig,amssymb,natbib,eucal,eufrak}

\usepackage{booktabs}
\usepackage{multirow}
\usepackage{siunitx}
\usepackage{qtree}
\usepackage{hyperref}

\usepackage{float}
\restylefloat{table}
\usepackage{color}

\usepackage{pdfpages}
\usepackage{wrapfig}

\usepackage{comment}
\usepackage[dvipsnames]{xcolor}

\usepackage{tikz}

\newtheorem{lem}{Lemma}

\newtheorem{ex}{Example}

\newtheorem{remark}{Remark}

\newcommand{\blind}{0}

\addtolength{\oddsidemargin}{-.5in}%
\addtolength{\evensidemargin}{-.5in}%
\addtolength{\textwidth}{1in}%
\addtolength{\textheight}{1.3in}%
\addtolength{\topmargin}{-.8in}%

\def\baselinestretch{1.19}

\begin{document}

\def\spacingset#1{\renewcommand{\baselinestretch}%
{#1}\small\normalsize} \spacingset{1}


\if0\blind
{\title{\bf {The Water Puzzle and Marginal Utility Optimization}}
\author{Yaakov Malinovsky
\thanks{Department of Mathematics and Statistics, University of Maryland,
Baltimore County, Baltimore, MD 21250, USA.
Email: {\tt yaakovm@umbc.edu}.  Research supported in part
by BSF grant 2020063.}
\,\,\,
and\,\,\,
Isaac M. Sonin\thanks{
Department of Mathematics and Statistics, University of North Carolina at Charlotte, Charlotte,
NC 28223, USA.
Email: {\tt imsonin@uncc.edu}.}
}
 \maketitle
} \fi
\if1\blind
{
  \bigskip
  \bigskip
  \bigskip
  \begin{center}
    {\LARGE\bf The Water Puzzle and Marginal Utility Optimization }
\end{center}
  \medskip
} \fi

\begin{abstract}
We present a variation of the water puzzle, which is related to a simple model of marginal utility. The problem has an intriguing solution and can be extended in several directions.
\end{abstract}
\noindent%
{\it Keywords: convexity, marginal utility, optimization}

\noindent%
{\it MSC2020: 00A08, 49 }

\spacingset{1.15} 
\section*{Water Puzzle Defying Intuition}
A few years ago, one of the authors of this note proposed the following problem for a math olympiad.
There are two cups of tea on a table, each with a two-unit capacity.
Cup 1 contains one unit of tea with an $80\%$ concentration, while cup 2 contains one unit of tea with a $25\%$ concentration.
You have one unit of hot water in your cup. Distribute it between these two cups: $x$ ($0\leq x\leq1$) into cup 1 and the rest, $1-x$, into cup 2.
You will then be given back the volume $x$ from cup 1 and the volume $1-x$ from cup 2. This means that you will get back one unit in total.
You're ready to drink your tea. You just need to figure out what value of $x$ will make your tea as strong as possible.

All of the participants, and a majority of the organizing mathematicians, in the True/False format got the wrong
answer $x=1$, probably with what seems a natural reasoning.
If $x=1$, we receive tea with $40\%$ concentration. Why mess with the lower-concentration cup?
As we will see below, the correct answer is with $x<1$ and maximal tea concentration larger than $40\%$.
Further, we show that if the smaller concentration is more than one-fourth of the larger concentration, then the cup with the lower concentration should be always used.

Let us consider a scenario in which a similar problem is encountered with three cups of tea, each with a respective concentration of $80\%$, $40\%$, and $25\%$.
Once more, one unit of hot water must be distributed between the three cups. The corresponding amounts are then returned from each cup, resulting in a total of one unit. It is interesting to note that, in this case, the cup with the lowest concentration should not be used. However, if one considers any two cups from the three cups, both will be used.

\section*{Marginal Utility Model}
Now, we formulate the maximization problem with a transparent socio-economic interpretation, which is equivalent to the $n$ cups problem. There are $n$ potential projects ranked by positive indices $a_1>a_2>\cdots>a_n>0$ and available unit resources that must be distributed among projects according to the vector $x=\left(x_1, x_2,\ldots,x_n\right)$. What is the optimal allocation if a decision maker wants to maximize her/his potential return? That is, the decision maker (investor) wants to maximize the function
\begin{align}
 \label{eq:O}
&	
F(x_1,\dots,x_n)=\sum_{i=1}^n \frac{a_ix_i}{1+x_i}:=\sum_{i=1}^n g_i(x_i),
\end{align}
subject to
\begin{align}
\label{eq:st}
&
x=(x_1,\ldots,x_n)\in S_n,\,\,\,
S_n=\left\{(x_1,\ldots,x_n):x_1+\cdots+x_n=1,\,\, x_i\geq 0,\,\, i=1,\ldots,n.\right\}.
\end{align}

Each function in the sum in \eqref{eq:O} has the form $\frac{ax}{1+x}$.
This function is smooth and concave downward.
Similar functions are used in economics textbooks to illustrate the concept of marginal utility and the law of diminishing marginal utility.
In our model, if vector $x$ is an admissible solution, then the marginal utility of investment $x_i$ into project $i$, $g_i^{\prime}(x_i)$, where $g_i^{\prime}(x_i)$ is a notation for the derivative of a function $g_i(x_i)$, is a decreasing function of $x_i$ with respect to investment growth.
Philosophers, political scientists, and economists developed both the concept and the law to explain consumers' motives, human behavior in general, and the economic reality of price.
This concept and the law have an intriguing history, which can be found in the classical book of \cite{K2016} and traces back to Aristotle ($4^{\text{th}}$ century BC) and Daniel Bernoulli ($18^{\text{th}}$ century). During the $18^{\text{th}}$ and $19^{\text{th}}$ centuries, it was discussed in different forms by many prominent scientists in economics, finance, and philosophy.
Later, it was revived by various $20^{\text{th}}$-century thinkers, with early contributions by Frank P. Ramsey,  John von Neumann, and Oskar Morgenstern, among many others. Nowadays, it is lay knowledge.
The problem of maximization in \eqref{eq:O}, subject to the condition in \eqref{eq:st}, is probably the simplest model where the decision maker has to solve the problem of optimal allocation of resources between $n$ projects with variable rates of return, depending on vector $x$.
Although our model seems quite natural, we were unsuccessful in finding any references to this or a similar optimization model in the literature.

\section*{Three Possible Solutions}
There are three different, although essentially equivalent, solutions. All three prove the following intuitively clear statements:
\par
A) the optimal solution exists and is unique;
B) for this optimal solution there is an ``active'' zone of investment that includes the first $k^{\star}$ projects,
$1\leq k^{\star}\leq n$, and $x_i=0$ for $k^{\star}<i\leq n$.
\par
Subsequently, one possible approach is that the marginal utility for all active projects must be equal, i.e. $g'_i(x_i)=constant>0$ for all $1\leq i \leq k^{\star}$. Citing the classical textbook in economics \cite[Chapter 5]{SN2020}: ``The fundamental condition of maximum satisfaction or utility is the Equimarginal principle. It states that a consumer will achieve maximum satisfaction or utility when the marginal utility of the last dollar spent on a good is exactly the same as the marginal utility of the last dollar spent on any other good''.  The corresponding algorithm to find $k^{\star}$ and the optimal vector $x$ is based on the sequential consideration of equalities $g^{\prime}_1(x_1)=g^{\prime}_2(x_2)$, $g^{\prime}_2(x_2)=g^{\prime}_3(x_3)$, etc., until $k^{\star}$ is reached.\par

The second possible approach, based on the concept of Lagrange multipliers  and convex optimization, was proposed by Andrey Dmitruk, who read the original draft of our paper. This approach requires a higher level of knowledge of optimization theory.

The third possible approach, based on convexity and elementary calculus, is presented in the next section.
This approach is elementary and easily accessible to undergraduates. We also provide all the necessary details to make it self-contained.
Furthermore, we obtain a closed-form solution to the problem.



\section*{Algorithm of Solution}
It is clear that the problem  of maximization \eqref{eq:O} is equivalent to the problem of minimization
\begin{align}
\label{eq:Con}
\min_{x\in S_n}\sum_{i=1}^n \frac{a_i}{1+x_i}
\end{align}
subject to
\begin{align*}
&
x=(x_1,\ldots,x_n)\in S_n,\,\,\,
S_n=\left\{(x_1,\ldots,x_n):x_1+\cdots+x_n=1,\,\, x_i\geq 0,\,\, i=1,\ldots,n.\right\}.
\end{align*}

We can express $x_1$ as $x_1=1-x_2-x_3-\cdots-x_{n}$ and reformulate
the optimization problem \eqref{eq:Con} as follows.
For the numbers
\begin{equation}
\label{eq:strict}
a_1>a_2>\cdots>a_n>0,
\end{equation}
we want to minimize
\begin{align}
 \label{eq:O1}
&	
f\left(x_2,\ldots,x_{n}\right):=\sum_{i=2}^{n}\frac{a_i}{1+x_i}+\frac{a_1}{2-x_2-\cdots-x_{n}}
\end{align}
subject to
\begin{align}
\label{eq:5}
&
x=(x_2,\ldots,x_{n})\in S^{\star}_{n-1},\,\,\,
S^{\star}_{n-1}=\left\{(x_2,\ldots,x_{n}):x_2+\cdots+x_{n}\leq 1,\,\, x_i\geq 0,\,\, i=2,\ldots,n\right\}.
\end{align}


We present below two lemmas which allow us to obtain an efficient and simple algorithm for solving \eqref{eq:O1} with constraints \eqref{eq:5}.
\begin{lem}
\label{l1}
The function $f(x)$ attains a unique global minimum on $S^{\star}_{n-1}$.
\end{lem}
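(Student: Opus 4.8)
The plan is to separate the claim into existence and uniqueness, deriving existence from compactness and uniqueness from strict convexity. First I would record the geometry of the feasible set: $S^{\star}_{n-1}$ is nonempty (it contains the origin), closed and bounded (each coordinate lies in $[0,1]$), hence compact, and convex (an intersection of half-spaces). On this set every denominator in \eqref{eq:O1} is bounded away from zero, since $1+x_i\geq 1$ and $2-x_2-\cdots-x_n\geq 1$ because $x_2+\cdots+x_n\leq 1$ on $S^{\star}_{n-1}$. Consequently $f$ is continuous (indeed smooth) there, and the extreme value theorem immediately furnishes a global minimizer. This disposes of existence.

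For uniqueness I would prove that $f$ is strictly convex on $S^{\star}_{n-1}$. Write $f=\sum_{i=2}^{n}h_i(x_i)+\phi(x_2+\cdots+x_n)$, where $h_i(t)=a_i/(1+t)$ and $\phi(t)=a_1/(2-t)$. Each $h_i$ has second derivative $2a_i/(1+t)^3>0$ for $t\geq 0$, so it is strictly convex in its own variable, and therefore the separable sum $\sum_{i=2}^{n}h_i(x_i)$ is strictly convex in $(x_2,\ldots,x_n)$: any two distinct feasible points differ in at least one coordinate $j$, and the strict convexity of $h_j$ gives a strict inequality in the convexity relation while the remaining terms contribute a weak one. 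The coupling term $\phi(x_2+\cdots+x_n)$ is the composition of the convex map $\phi$ (second derivative $2a_1/(2-t)^3>0$ for $t\leq 1$) with an affine map, hence convex. Adding a convex function to a strictly convex function preserves strict convexity, so $f$ is strictly convex. (Equivalently, one may note that the Hessian of $f$ equals $D+c\,\mathbf{1}\mathbf{1}^{\top}$ with $D$ diagonal positive definite and $c>0$, which is positive definite.)

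It then remains to invoke the standard consequence of strict convexity on a convex domain: if $x\neq y$ were two minimizers sharing the optimal value $m$, then strict convexity would give $f\!\left(\tfrac{x+y}{2}\right)<\tfrac12 f(x)+\tfrac12 f(y)=m$, contradicting minimality. Hence the minimizer is unique, and combined with the existence argument this proves the lemma.

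The main obstacle I anticipate is precisely the coupling term $\phi$: it is only convex and not strictly convex, being constant along every direction that leaves $x_2+\cdots+x_n$ unchanged, so no strict convexity can be extracted from it alone. The key observation carrying the argument is that the separable part $\sum_{i=2}^{n}h_i(x_i)$ already supplies strict convexity in every direction, after which the convexity of $\phi$ is harmless. I would therefore be careful to phrase the strict-convexity step so that the differing coordinate is handled by the separable terms, with $\phi$ relegated to a weak inequality.
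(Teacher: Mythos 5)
Your proposal is correct and follows the same route as the paper: existence via compactness of $S^{\star}_{n-1}$ and continuity of $f$ (Weierstrass/extreme value theorem), and uniqueness via strict convexity. You simply supply the details of the strict-convexity verification (separable strictly convex part plus a convex coupling term) that the paper leaves implicit.
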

\begin{proof}
The set $S^{\star}_{n-1}$ is a closed and bounded set in $\mathbb{R}^{n}$, and $f(x)$ is a convex continuous function in $\mathbb{R}^{n}$. Therefore, by the Weierstrass theorem, $f(x)$ attains its global minimum.
The uniqueness of this minimum follows from the strict convexity of the function $f(x)$.

\end{proof}

Define
\begin{align}
\label{eq:ref}
x^{\star}=\left(x^{\star}_2, x^{\star}_3,\ldots,x^{\star}_{n}\right)=\arg\min_{x\in S^{\star}_{n-1}}f\left(x_2,\ldots,x_{n}\right),\,\,\, x^{\star}_1=1-x^{\star}_2-\cdots-x^{\star}_{n}.
\end{align}
Due to Lemma~\ref{l1}, the solution \( x^{\star} \) to the problem \eqref{eq:ref} exists and is unique.
Next, we present a simple method to find it by exploiting the strict convexity of the function \( f(x) \).

We will need to consider the function \( f(x_2, \ldots, x_n) \), defined in \eqref{eq:O1}, in the special case where the last \( n - l \) variables are set to zero. Accordingly, we define
\begin{equation*}
f_{2:l}(x_2, \ldots, x_l) = f(x_2, \ldots, x_l, 0, \ldots, 0), \quad l = 2, \ldots, n,
\end{equation*}

and
\begin{align*}
{\displaystyle {\nabla f_{2:l}}(x_2, \ldots, x_l)=\left(\frac{\partial f_{2:l}(x_2, \ldots, x_l) }{\partial x_2},\ldots,\frac{\partial f_{2:l}(x_2, \ldots, x_l)}{\partial x_{l}}\right)},\,\, l=2,\ldots,n.
\end{align*}

\begin{lem}
\label{l2}
If the root of
\begin{equation}
\label{eq:grad_zero}
\nabla f_{2:n}(x_2, \ldots, x_n) = 0_{1 \times (n-1)}
\end{equation}
belongs to \( S^{\star}_{n-1} \), then this root is the optimal value \( x^{\star} \), which is an interior point of \( S^{\star}_{n-1} \) satisfying
\begin{equation}
\label{eq:strict_ineq}
1 > x^{\star}_2 > \cdots > x^{\star}_n > 0.
\end{equation}
Otherwise, \( x^{\star} \) is a boundary point of \( S^{\star}_{n-1} \). In this case, one of the following holds:
\begin{itemize}
\item[(i)]
There exists \( k \in \{2, \ldots, n-1\} \) such that
\begin{equation*}
\label{eq:boundary_k}
1 > x^{\star}_2 > \cdots > x^{\star}_{k} > x^{\star}_{k+1} = \cdots = x^{\star}_{n} = 0,
\end{equation*}
and
\begin{equation*}
\label{eq:grad_k}
\nabla f_{2:k}(x^{\star}_2, \ldots, x^{\star}_k) = 0_{1 \times (k-1)};
\end{equation*}
\item[(ii)]
If no such \( k \) exists, then
\begin{equation*}
\label{eq:all_zero}
x^{\star}_2 = \cdots = x^{\star}_n = 0.
\end{equation*}
\end{itemize}
\end{lem}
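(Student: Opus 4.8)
The plan is to characterize $x^\star$ entirely through elementary ``resource-transfer'' perturbations, leaning on the strict convexity of $f$ supplied by Lemma~\ref{l1} rather than on Lagrange multipliers. First I would record, for $i=2,\dots,n$ and with $x_1=1-x_2-\cdots-x_n$ (so that $2-x_2-\cdots-x_n=1+x_1$), the partial derivative $\partial f/\partial x_i=-a_i/(1+x_i)^2+a_1/(1+x_1)^2$, which exhibits the equimarginal structure $g_i'(x_i)=a_i/(1+x_i)^2$. Since $f$ is strictly convex and differentiable on a neighborhood of the convex set $S^\star_{n-1}$, a feasible point minimizes $f$ if and only if every feasible direction has nonnegative directional derivative; in particular an interior stationary point is automatically the unique global minimizer. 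This settles the first assertion: if the root of $\nabla f_{2:n}=0$ lies in $S^\star_{n-1}$ it must equal $x^\star$, and the equimarginal identities below force its coordinates into the strict chain $x_2^\star>\cdots>x_n^\star$, so that—barring the boundary instance noted at the end—it is interior.

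The central device is the pairwise transfer of a small amount $t>0$ from coordinate $x_j$ to coordinate $x_i$ (the direction $e_i-e_j$), which leaves $x_1$ fixed and is feasible as soon as $x_j^\star>0$; its directional derivative at $x^\star$ is $\partial_i f-\partial_j f=a_j/(1+x_j^\star)^2-a_i/(1+x_i^\star)^2$. The first use is to show that project~$1$ is always active. If $x_1^\star=0$ then $x_2^\star+\cdots+x_n^\star=1$, so some $x_j^\star>0$; transferring from $x_j$ to project~$1$ (direction $-e_j$) has derivative $a_j/(1+x_j^\star)^2-a_1<a_j-a_1<0$, a feasible descent direction, contradicting optimality. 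Hence $x_1^\star>0$, the constraint $x_2+\cdots+x_n\le 1$ is slack, and therefore $x_i^\star<1$ for every $i$, which is the ``$1>$'' part of the claim.

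With $x_1^\star>0$ in hand, both transfers $\pm e_i$ between coordinate $i$ and project~$1$ are feasible whenever $x_i^\star>0$, forcing $\partial_i f=0$, i.e.\ the equimarginal identity $a_i/(1+x_i^\star)^2=a_1/(1+x_1^\star)^2=:\lambda>0$; a zero coordinate admits only the direction $+e_i$, giving $\partial_i f\ge 0$, i.e.\ $a_i\le\lambda$. To see that the active set is an initial segment, suppose $i<j$ with $x_i^\star=0$ and $x_j^\star>0$: then $a_i\le\lambda=a_j/(1+x_j^\star)^2<a_j$, contradicting $a_i>a_j$. Thus there is a $k$ with $x_2^\star,\dots,x_k^\star>0$ and $x_{k+1}^\star=\cdots=x_n^\star=0$. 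For two active indices $i<j$ the shared value $\lambda$ together with $a_i>a_j$ gives $(1+x_i^\star)^2=a_i/\lambda>a_j/\lambda=(1+x_j^\star)^2$, hence $x_i^\star>x_j^\star$, yielding the strict chain in \eqref{eq:strict_ineq}. Since the active coordinates satisfy $2-x_2^\star-\cdots-x_k^\star=1+x_1^\star$, the identities $\partial_i f=0$ are precisely $\nabla f_{2:k}(x_2^\star,\dots,x_k^\star)=0$; sorting by $k=n$ (interior), $2\le k\le n-1$ (case (i)), and $k=1$ (case (ii)) finishes the argument.

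I expect the main obstacle to be conceptual rather than computational: justifying that the first-order feasible-direction conditions are both necessary and sufficient without invoking Lagrange/KKT theory, and keeping the bookkeeping of the coupling through $x_1$ honest. The pairwise-transfer formulation is what makes this clean, because transfers between two projects hold $x_1$ fixed and reduce every comparison to the two numbers $a_i/(1+x_i^\star)^2$ and $a_j/(1+x_j^\star)^2$. A minor edge case worth a remark is the situation in which the root of $\nabla f_{2:n}=0$ has its smallest coordinate equal to $0$, where ``interior'' should be read as the limiting instance of case~(i) with $k=n-1$.
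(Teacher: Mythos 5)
Your proof is correct, but it follows a genuinely different route from the paper's. The paper's proof rests on two ingredients: (1) a swap/rearrangement argument showing that if at least one of $x^{\star}_i, x^{\star}_{i+1}$ is nonzero then $x^{\star}_i > x^{\star}_{i+1}$ (exchanging the two values strictly decreases the objective when $a_i > a_{i+1}$, since the term $a_1/(2-x_2-\cdots-x_n)$ is unaffected by the swap), and (2) a recursive descent on the dimension: by strict convexity the gradient root, if feasible, is the minimizer; otherwise some coordinate vanishes, the monotonicity forces $x^{\star}_n = 0$, and one repeats with $k = n-1$, $n-2$, and so on. You instead derive the first-order feasible-direction conditions at $x^{\star}$ and extract the equimarginal identities $a_i/(1+x^{\star}_i)^2 = \lambda$ on the active set and $a_i \le \lambda$ off it, from which the initial-segment structure, the strict chain, and the gradient equations $\nabla f_{2:k} = 0$ all follow at once. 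This is in effect the ``equimarginal principle'' argument that the paper lists as its first possible approach but does not carry out, and it buys you more: an explicit characterization of $\lambda$ and hence the closed form, plus the observation $x^{\star}_1 > 0$, which the paper leaves implicit. The paper's swap argument is more elementary still (no derivatives needed for the monotonicity step) and sets up the sequential algorithm more directly. One point to tighten: your justification that feasible-direction stationarity is sufficient for optimality should be stated as the standard fact that for a convex differentiable $f$ on a convex set, $\nabla f(x^{\star})\cdot(y-x^{\star})\ge 0$ for all feasible $y$ implies global optimality; your pairwise transfers generate exactly these directions, so the argument closes. You are also right to flag the degenerate case where the root of $\nabla f_{2:n}=0$ has $x_n=0$: there the point is a boundary minimizer rather than an interior one, an edge case that the paper's own proof glosses over as well.
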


\begin{proof}
The following observation is key to determining \( x^{\star} \).
If at least one of \( x^{\star}_i \) and \( x^{\star}_{i+1} \) is nonzero, then
\begin{equation}
\label{eq:Ine}
x^{\star}_i > x^{\star}_{i+1}.
\end{equation}

To prove the inequality \eqref{eq:Ine}, suppose, to the contrary, that \( x^{\star}_i < x^{\star}_{i+1} \). Since \( a_i > a_{i+1} \), swapping \( x^{\star}_i \) and \( x^{\star}_{i+1} \) decreases the value of the objective function in \eqref{eq:O1} (by the rearrangement inequality), without violating the constraints in \eqref{eq:5}. This contradicts the optimality of \( x^{\star} \), and thus the inequality \eqref{eq:Ine} follows.

From the strict convexity of \( f(x) \) on \( S^{\star}_{n-1} \), it follows that if the root of \eqref{eq:grad_zero} lies in \( S^{\star}_{n-1} \), then it is the optimal value \( x^{\star} \).
Since \( a_1 > a_2 > \cdots > a_n > 0 \), this value is an interior point of \( S^{\star}_{n-1} \), satisfying \eqref{eq:strict_ineq} by \eqref{eq:Ine}.
Otherwise, \( x^{\star} \) lies on the boundary of \( S^{\star}_{n-1} \); that is, at least one component \( x^{\star}_i \) is equal to zero.
Appealing to \eqref{eq:Ine}, it follows that \( x^{\star}_n = 0 \).

Now, setting \( k = n-1 \), we again check—by strict convexity—whether the root of \eqref{eq:grad_k}, together with the condition \( x^{\star}_n = 0 \), lies in \( S^{\star}_{n-1} \).
If so, we obtain case (i) with \( k = n-1 \); otherwise, by \eqref{eq:Ine}, we conclude that \( x^{\star}_{n-1} = 0 \) also, and we continue in the same manner.
\end{proof}

First, we demonstrate the solution and illustrate how Lemmas~\ref{l1} and~\ref{l2} are applied to solve problem~\eqref{eq:O} subject to the constraints in~\eqref{eq:st}, in the simplest case, \( n = 2 \).

\begin{ex}[Two cups]
In the case \( n = 2 \), equations \eqref{eq:O1} and \eqref{eq:5} reduce to
\begin{align*}
f(x_2) = \frac{a_2}{1 + x_2} + \frac{a_1}{2 - x_2}, \quad 0 \leq x_2 \leq 1.
\end{align*}
By Lemma~\ref{l1}, the function \( f(x_2) \) attains a unique global minimum on the interval \([0, 1]\). By \eqref{eq:grad_zero}, we have \( \nabla f_{2:n}(x_2, \ldots, x_n) = 0 \)
if and only if
\[
x_2 = \frac{2\sqrt{a_2} - \sqrt{a_1}}{\sqrt{a_1} + \sqrt{a_2}}.
\]
Since \( a_1 > a_2 > 0 \), it follows that \( x_2 < 1 \). Moreover, \( x_2 > 0 \) if and only if \( a_2 > a_1 / 4 \). Therefore, by Lemma~\ref{l2}, the optimal solution is given by
\begin{align*}
\label{eq:sol}
(x^{\star}_{1}, x^{\star}_2)
=\left\{
\begin{array}{lll}
    \left(\frac{2\sqrt{a_1} - \sqrt{a_2}}{\sqrt{a_1} + \sqrt{a_2}},\, \frac{2\sqrt{a_2} - \sqrt{a_1}}{\sqrt{a_1} + \sqrt{a_2}}\right) & \text{if}  & a_2 > a_1 / 4, \\
    \left(1,\, 0\right) & \text{otherwise}.
\end{array}
\right.
\end{align*}

Recalling the example with two cups of tea at respective concentrations of \( 80\% \) and \( 25\% \), we obtain \( x^{\star}_1 = 0.9242\ldots \), \( x^{\star}_2 = 0.0757\ldots \), and a maximal tea concentration (see \eqref{eq:O}) of \( F(x^{\star}_1, x^{\star}_2) = 0.4018\ldots \), i.e., approximately \( 40.2\% \).
\end{ex}

An obvious observation is that if at least one cup is not used, and we add a new cup whose concentration is no greater than the concentrations of the unused cups, then this additional cup should also not be used.
Combining this fact with Lemma~\ref{l2} and its proof, and recalling that $x^{\star}_1=1-x^{\star}_2-\cdots-x^{\star}_{n}$ , we obtain the optimal solution to problem~\eqref{eq:Con}, which we present below as a sequential algorithm.

\bigskip

Define $$e_i=\sqrt{\frac{a_i}{a_{1}}},\,\,\,\,i=2,\ldots,n,\,\,\,\,e_1:=1.$$

\begin{center}
{\bf ALGORITHM}
\end{center}
\begin{itemize}
\item
Find sequentially (and stop at the first violation)
$$k^{\star}_n=\max \left\{2\leq j \leq n: e_j>\frac{e_{j-1}+\cdots+e_1}{j}\right\}.$$
\item
If the set in the parentheses is empty, then the optimal solution is $x^{\star}_{1}=1, x^{\star}_{2}=\cdots=x^{\star}_{n}=0$.
If the set in the parentheses is not empty, then the optimal solution is
\begin{equation*}
x^{\star}_{j}=\left\{\begin{array}{ccc}
                       \frac{k^{\star}_n\times{e_j}-\sum_{i=1, i\ne j}^{k^{\star}_n}{e_i}}{{e_1}+{e_2}+\cdots+{e_{k^{\star}_n}}} & for & j=1,\ldots, k^{\star}_n \\
                       0 & for & j=k^{\star}_n+1,\ldots,n.
                     \end{array}
                     \right.
\end{equation*}
\end{itemize}

\begin{remark}
If some of the inequalities in~\eqref{eq:strict} are not strict, then certain inequalities in \eqref{eq:Ine} and Lemma~\ref{l2} may also become non-strict.
As a result, some nonzero components of the optimal solution may be equal.
Nevertheless, the same algorithm can still be applied without modification.
\end{remark}


\begin{ex}[ three cups]
The algorithm implementation supplies the solution:
\begin{align*}
&
\left(x^{\star}_{1}, x^{\star}_{2}, x^{\star}_{3}\right)=
\left\{
\begin{array}{lll}
  \left(\frac{3e_1-e_2-e_3}{e_1+e_2+e_3},\,\frac{3e_2-e_1-e_3}{e_1+e_2+e_3}, \,\frac{3e_3-e_1-e_2}{e_1+e_2+e_3}\right) & {\text{if}} & 3e_3>e_1+e_2 \\
  \left(\frac{2e_1-e_2}{e_1+e_2},\,\frac{2e_2-e_1}{e_1+e_2},\,0\right)  &\text{if} & \,2e_2>e_1.\\
  \left(1,\,0,\,0\right) & & otherwise.
\end{array}
\right.
\end{align*}
\end{ex}
Recalling the example with three cups of tea at respective concentrations of  $80\%, 40\%$, and $25\%$, we obtain $x^{\star}_{1}=0.7573\ldots,\,\, x^{\star}_{2}=0.2426\ldots,\,\, x^{\star}_{3}=0$ and a maximal tea concentration (see \eqref{eq:O}) of $F(x^{\star}_1, x^{\star}_2, x^{\star}_3)=0.4228...$, i.e. about $42.3\%$.

\section*{Historical Remarks and Open Problems}
It is worth noting that water puzzles have a long history. According to \cite{KN2021}, some of these puzzles are believed to date back to medieval times.
The model described above can be extended in several directions. One is through adding more constraints on vector $x$. For example, say there are some ``priority''
projects, so for some coordinates we choose bounds $b_i$ and $d_i$ and require that $0<b_i\leq x_i\leq d_i<1$. Another interesting extension is to consider a stochastic modification and to identify finance-economic problems of optimization that can be reduced to the problem analyzed in this paper, at least locally.

An especially compelling extension of our model involves a game-theoretical formulation in which $N$ participants, each possessing potentially different amounts of resources, allocate these resources across $n$ available projects governed by a modified function $F$. This scenario is analogous to a setting with $n$ cups and $N$ players, where each player, equipped with a cup of pure water of a specific volume, seeks to maximize the strength of her tea.

\section*{Acknowledgements}
We thank Yosef Rinott and Andrey Dmitruk for their insightful remarks and valuable suggestions on the original draft. We also gratefully acknowledge the three referees for their helpful comments, which led to significant improvements in the presentation of this paper.

\end{document}